\newtheorem{theorem}{Theorem}
\newtheorem{lemma}{Lemma}
\newtheorem{proposition}{Proposition}
\theoremstyle{definition}
\newtheorem{definition}{Definition}
\newcommand{\C}{\mathbb{C}}      		    	% complex numbers
\newcommand{\N}{\mathbb{N}}		            	% natural numbers
\newcommand{\R}{\mathbb{R}}		            	% real numbers
\renewcommand{\d}{\mathrm{d}}                   % differential
\newcommand{\p}{\mathfrak{p}} % function property
\newcommand{\q}{\mathfrak{q}} % function property
\newcommand{\defeq}{\coloneqq}              			% definition-equals sign
\newcommand{\eref}[1]{Eq.~(\ref{#1})}           			% equation reference
\newcommand{\neref}[1]{(\ref{#1})}           			% equation reference
\newcommand{\sumvariable}{\mathcal{Z}}
\newcommand{\thmref}[1]{Theorem~\ref{#1}}   		% theorem reference
\newcommand{\thmrefs}[2]{Theorems~\ref{#1}~and~\ref{#2}}   		% theorem reference
\newcommand{\lemref}[1]{Lemma~\ref{#1}}     		% lemma reference
\newcommand{\propref}[1]{Proposition~\ref{#1}}     		% proposition reference
\begin{document}

\title[On a Weighted Series of the Hurwitz Zeta Function]{On a Weighted Series of the Hurwitz Zeta Function}

\author{Matthew Fox}
\address{Department of Physics\\ University of California, Santa Barbara}
\email{msfox@ucsb.edu}

\author{Chaitanya Karamchedu}
\address{Department of Computer Science\\ University of Maryland}
\email{cdkaram@umd.edu}

\maketitle

\begin{abstract}
In this note we prove that for all $a \in \N$, $x \in \R_+ \cup \{0\}$, and $s \in \C$ with $\Re(s) > a + 2$, the (alternating) weighted series of the Hurwitz zeta function,
$$
\sum_{k \geq 1} (\pm 1)^k (k + x)^a\zeta(s,k + x),
$$
resolves into a finite combination of Hurwitz (Lerch) zeta functions. This applies in Marichal and Zena\"idi's theory on analogues of the Bohr-Mollerup theorem for higher-order convex functions.
\end{abstract}

\section{Introduction}

The Bohr-Mollerup theorem proves that the only logarithmically convex functions $f : \R_+ \rightarrow \R_+$ that satisfy $f(x+1) = xf(x)$ are the positive constant multiples of the gamma function, $\Gamma(x)$. The equivalent, but additive formulation of this theorem is the following, wherein we write $\Delta$ for the unit-step forward difference operator $\Delta : f(x) \mapsto f(x + 1) - f(x)$.
\begin{theorem}[Bohr-Mollerup \cite{Artin1, Bohr1}]
\label{thm:Bohr-Mollerup}
All convex solutions $f : \R_+ \rightarrow \R$ to the difference equation $\Delta f(x) = \ln x$ are of the form $f(x) = c + \ln \Gamma(x)$ for some constant $c \in \R$.
\end{theorem}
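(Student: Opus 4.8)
The plan is to run Artin's classical argument, converting convexity into monotonicity of difference quotients. First one checks that $\ln\Gamma$ is itself a convex solution: it satisfies $\Delta\ln\Gamma(x)=\ln x$ because $\Gamma(x+1)=x\Gamma(x)$, and it is convex on $\R_+$ (for instance $(\ln\Gamma)''(x)=\sum_{n\ge 0}(x+n)^{-2}>0$). So it suffices to show that an arbitrary convex solution $f$ coincides with $\ln\Gamma$ up to an additive constant. Iterating the difference equation gives $f(x+n)=f(x)+\ln\frac{\Gamma(x+n)}{\Gamma(x)}$ for every $n\in\N$, and the same identity holds for $\ln\Gamma$, so $f-\ln\Gamma$ is $1$-periodic; it is therefore enough to pin down $f$ on $(0,1]$.

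Fix $x\in(0,1]$ (the value $x=1$ being trivial) and an integer $n\ge 2$. Convexity makes the slope of a chord nondecreasing in each endpoint, so comparing the chords of $f$ over the intervals $[n-1,n]$, $[n,n+x]$, and $[n,n+1]$ yields
$$
\ln(n-1)=f(n)-f(n-1)\;\le\;\frac{f(n+x)-f(n)}{x}\;\le\;f(n+1)-f(n)=\ln n,
$$
where the two equalities are instances of $\Delta f(k)=\ln k$. Using the iterated recurrence (and $\Gamma(1)=1$) to write $f(n+x)-f(n)=\bigl(f(x)-f(1)\bigr)+\ln\frac{\Gamma(n+x)}{\Gamma(x)\Gamma(n)}$ and abbreviating $a(x):=f(x)-f(1)$, this rearranges to
$$
x\ln(n-1)-\ln\frac{\Gamma(n+x)}{\Gamma(x)\Gamma(n)}\;\le\;a(x)\;\le\;x\ln n-\ln\frac{\Gamma(n+x)}{\Gamma(x)\Gamma(n)}.
$$

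Now I would let $n\to\infty$. The asymptotic $\Gamma(n+x)/\Gamma(n)=n^{x}\bigl(1+o(1)\bigr)$ shows that both bounding quantities converge to $\ln\Gamma(x)$ (their difference is $x\ln\frac{n}{n-1}\to 0$), so the squeeze forces $a(x)=\ln\Gamma(x)$, i.e. $f(x)=f(1)+\ln\Gamma(x)$ on $(0,1]$. Setting $c:=f(1)$ and propagating this identity to all of $\R_+$ via $f(x+1)=f(x)+\ln x$ together with the matching relation for $\ln\Gamma$ completes the proof. The only genuinely delicate ingredient is the asymptotic $\Gamma(n+x)/\Gamma(n)\sim n^{x}$ needed to close the squeeze; in fact, exponentiating the displayed inequalities reproduces Gauss's product formula for $\Gamma(x)$, so one may equally well invoke that formula and read the conclusion off directly.
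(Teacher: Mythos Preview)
Your argument is correct and is precisely Artin's classical proof. The paper itself does not supply a proof of \thmref{thm:Bohr-Mollerup}; it merely states the result as background, citing Artin and Bohr--Mollerup, so there is no in-paper argument to compare against---your write-up matches the standard reference.
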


It is natural to consider analogues of this theorem for different difference equations as well as alternative function properties $\p$ such as monotonicity or higher-order convexity. Indeed, given function properties $\p$ and $\q$ as well as a function $g : \R_+ \rightarrow \R$ satisfying $\p$, it is evidently in the spirit of the Bohr-Mollerup theorem to ask: Does there exist a function $f : \R_+ \rightarrow \R$ satisfying $\q$ and the difference equation $\Delta f(x) = g(x)$ for all $x \in \R_+$? If so, is $f$ unique modulo an additive constant? And, if so, does $g$ completely determine $f$?

Krull was first to address such questions for eventually convex (concave) functions, which are functions that are convex (concave) on a neighborhood of infinity. Indeed, Krull proves that for any eventually convex (concave) function $g : \R_+ \rightarrow \R$ satisfying, for all integers $p > 0$,
\begin{equation}
\lim_{x \rightarrow \infty}\Delta^p g(x) = 0,
\label{eq:asymptoticcondition}
\end{equation}
there exists a unique, eventually concave (convex) solution $f : \R_+ \rightarrow \R$ to the difference equation $\Delta f(x) = g(x)$, modulo an additive constant \cite{Krull1, Krull2}. In addition, Krull also establishes that $g$ completely determines $f$, and even provides the exact functional relationship. Later, but independently, Webster established the multiplicative version of Krull's results \cite{Webster2, Webster1}.

While Krull and Webster's work constitute a considerable generalization of the Bohr-Mollerup theorem to a broader class of functions, the asymptotic condition \neref{eq:asymptoticcondition} is extremely restrictive and is not satisfied by even simple functions like $g(x) = x\ln x$ or apposite functions like $g(x) = \ln \Gamma(x)$ (see \thmref{thm:Bohr-Mollerup}). Thus, one naturally seeks to weaken it.

In their monograph \cite{Marichal1}, Marichal and Zena\"idi do exactly this, and in consequence extend the Bohr-Mollerup theorem to an even broader class of functions. Specifically, their results envelop the real-valued, eventually $p$-convex ($p$-concave) functions for every $p \in \N \cup \{-1\}$, where $\N$ includes $0$. Such ``higher-order'' convex and concave functions naturally appear in Newton interpolation theory \cite{Boor1}; they are defined as follows. 

\begin{definition}
Fix $p \in \N \cup \{-1\}$ and let $I \subseteq \R_+$ be an open and convex set. A function $f : \R_+ \rightarrow \R$ is \emph{$p$-convex} (\emph{$p$-concave}) on $I$ if and only if for all systems $x_0 < x_1 < \dots < x_{p + 1}$ of $p + 2$ points in $I$, the divided difference of $f$ at $x_0, x_1, \dots, x_{p+1}$ is non-negative (non-positive) \cite{Kuczma1, Marichal1}. Thus, a \emph{$1$-convex} (\emph{$1$-concave}) function is an ordinary convex (concave) function, a \emph{$0$-convex} (\emph{$0$-concave}) function is a weakly monotone increasing (decreasing) function, and a \emph{$-1$-convex} (\emph{$-1$-concave}) function is a non-negative (non-positive) function. Additionally, we say $f$ is \emph{eventually $p$-convex} (\emph{eventually $p$-concave}) if and only if there exists a convex neighborhood of infinity $I \subseteq \R_+$ on which $f$ is $p$-convex ($p$-concave).
\end{definition}

We now state Marichal and Zena\"idi's main theorem on analogues of the Bohr-Mollerup theorem for higher-order convex and concave functions.
\begin{theorem}[Marichal-Zena\"idi \cite{Marichal1}]
\label{thm:maintheorem}
Fix $p \in \N$ and suppose that the function $g : \R_+ \rightarrow \R$ is eventually $p$-convex ($p$-concave) and has the asymptotic property that the sequence $n \mapsto \Delta^p g(n)$ converges to zero. Then there exists a unique, eventually $p$-concave ($p$-convex) solution $f : \R_+ \rightarrow \R$ to the difference equation $\Delta f(x) = g(x)$, modulo an additive constant $c \in \R$. Moreover, $g$ uniquely determines $f$ through the equation
\begin{equation}
f(x) = c -g(x) + \lim_{n \rightarrow \infty} \left[ \sum_{k=1}^{n-1} \big(g(k) - g(k + x)\big) + \sum_{j = 1}^p \binom{x}{j}\Delta^{j-1} g(n) \right].
\label{eq:uniquesolution}
\end{equation}
\end{theorem}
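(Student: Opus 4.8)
\emph{Proof sketch (following the Krull--Webster method as carried out in \cite{Marichal1}).} The plan is to establish existence and uniqueness simultaneously by pinning down the value of an \emph{arbitrary} admissible solution. Since negating a function exchanges $p$-convexity with $p$-concavity, it suffices to treat the case in which $g$ is eventually $p$-convex, say $p$-convex on $(b,\infty)$. For $n \in \N$ set
\[
f_n(x) \defeq -g(x) + \sum_{k=1}^{n-1}\big(g(k) - g(k+x)\big) + \sum_{j=1}^p \binom{x}{j}\Delta^{j-1}g(n),
\]
so that \eqref{eq:uniquesolution} reads $f = c + \lim_{n\to\infty} f_n$. A short computation gives $f_{n+1}(x) - f_n(x) = -R_n(x)$, where
\[
R_n(x) \defeq g(n+x) - \sum_{j=0}^p \binom{x}{j}\Delta^j g(n)
\]
is the remainder, at the point $n+x$, of the degree-$p$ Newton forward-difference interpolant of $g$ at the nodes $n, n+1, \dots, n+p$; equivalently $R_n(x) = x(x-1)\cdots(x-p)\,g[n,n+1,\dots,n+p,n+x]$. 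Hence $f_n$ converges locally uniformly on $\R_+$ precisely when $\sum_n R_n$ does, and the limit $f$ then satisfies $\Delta f = g$: a second short computation yields $\Delta f_n(x) = g(x) - R_n(x) - \binom{x}{p}\Delta^p g(n)$, and letting $n\to\infty$ kills both correction terms --- the first by the convergence just mentioned, the second because $\Delta^p g(n)\to 0$ by hypothesis.

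The crux is the convergence of $\sum_n R_n$, and this is where $p$-convexity is used. Two ingredients are needed. First, $\Delta^{p+1}g \ge 0$ on a neighbourhood of infinity (this is $p$-convexity), so $\Delta^p g$ is eventually nondecreasing; since it also tends to $0$, the partial sums $\sum_{n=N}^{M}\Delta^{p+1}g(n) = \Delta^p g(M+1) - \Delta^p g(N)$ stay bounded, so $\sum_n \Delta^{p+1}g(n)$ converges. Second, the divided-difference recursion rewrites the $(p+1)$-st order quantity $g[n,n+1,\dots,n+p,n+x]$ as a difference of $p$-th order divided differences of $g$, and the $p$-th order divided differences of a $p$-convex function are monotone in each node (perturbing one node changes such a divided difference by a $(p+1)$-st order divided difference, which is $\ge 0$). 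Sandwiching those $p$-th order divided differences between the ones at consecutive integer nodes yields a bound $|R_n(x)| \le C(x)\big(\Delta^{p+1}g(n-1) + \cdots + \Delta^{p+1}g(n+p)\big)$ for $n$ large, with $C(x)$ a polynomial in $x$, hence locally bounded. Combining the two ingredients, $\sum_n R_n$ converges locally uniformly, so $f \defeq c + \lim_n f_n$ exists and solves $\Delta f = g$; moreover $f$ is eventually $p$-concave because $\Delta^{p+1}f = \Delta^p(\Delta f) = \Delta^p g$ is $\le 0$ near infinity (nondecreasing there and tending to $0$).

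For uniqueness --- and to see that \emph{every} eventually $p$-concave solution has the form \eqref{eq:uniquesolution} --- let $f$ be any such solution. Applying the remainder identity to $f$ gives $f(n+x) - \sum_{j=0}^p \binom{x}{j}\Delta^j f(n) = \rho_n(x)$ with $\rho_n(x) = x(x-1)\cdots(x-p)\,f[n,\dots,n+p,n+x]$, and the sandwich argument of the previous paragraph (with $f$ in place of $g$) gives $|\rho_n(x)| \le C(x)\big(|\Delta^{p+1}f(n-1)| + \cdots + |\Delta^{p+1}f(n+p)|\big)$; since $\Delta^{p+1}f = \Delta^p g \to 0$ this forces $\rho_n(x)\to 0$ locally uniformly. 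Now use the difference equation to substitute $f(n+x) = f(x) + \sum_{k=0}^{n-1}g(x+k)$, $f(n) = f(1) + \sum_{k=1}^{n-1}g(k)$, and $\Delta^j f(n) = \Delta^{j-1}g(n)$ for $j\ge 1$; rearranging $\rho_n(x)\to 0$ produces exactly
\[
f(x) = f(1) - g(x) + \lim_{n\to\infty}\left[\sum_{k=1}^{n-1}\big(g(k) - g(k+x)\big) + \sum_{j=1}^p \binom{x}{j}\Delta^{j-1}g(n)\right],
\]
i.e.\ $f$ is \eqref{eq:uniquesolution} with $c = f(1)$. Consequently two eventually $p$-concave solutions differ by the constant $f_1(1)-f_2(1)$, while adding any constant to the solution constructed above preserves both $\Delta f = g$ and eventual $p$-concavity; hence the solution is unique modulo an additive constant and $g$ determines it through \eqref{eq:uniquesolution}.

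I expect the genuine work to lie in the estimates for the Newton remainders $R_n$ and $\rho_n$: the delicate points are the node-monotonicity of $p$-th order divided differences for merely $p$-convex functions (and the bookkeeping of node orderings required to invoke it), the treatment of $x$ ranging over an arbitrary bounded set rather than just $(0,1)$, and the fact that $g$ is only \emph{eventually} $p$-convex so that all nodes must be pushed past $b$. By contrast the algebraic identities --- the formula for $f_{n+1}-f_n$, the formula for $\Delta f_n$, and the final rearrangement of $\rho_n(x)\to 0$ --- are routine, as is the passage of $p$-concavity to locally uniform limits (a divided difference is a continuous functional of finitely many values).
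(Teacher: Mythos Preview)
The paper does not prove this theorem at all: \thmref{thm:maintheorem} is simply quoted from Marichal and Zena\"idi's monograph \cite{Marichal1} and then \emph{applied} (in \lemref{lem:phibohr} and thence \propref{prop:phibohrpolynomial}). There is therefore no in-paper proof to compare your sketch against.

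Your outline is a faithful account of the Krull--Webster argument as extended by Marichal and Zena\"idi: identify the Newton forward-difference remainder $R_n(x)$ as controlling both the convergence of the candidate $f_n$ and the defect in $\Delta f_n = g$, bound $R_n$ via node-monotonicity of $p$-th order divided differences together with summability of $\Delta^{p+1}g$, and recover uniqueness by running the same remainder estimate on an arbitrary admissible $f$. One point in your sketch is too fast: the sentence ``$f$ is eventually $p$-concave because $\Delta^{p+1}f=\Delta^p g\le 0$ near infinity'' only checks nonpositivity of $(p{+}1)$-st divided differences at \emph{integer} nodes, whereas eventual $p$-concavity demands it for \emph{all} node systems in a half-line. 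The correct route (which you hint at later) is either to pass $p$-concavity through the locally uniform limit after first checking it for the approximants, or---as in \cite{Marichal1}---to deduce it from the sandwich estimate itself. Apart from that, and the bookkeeping issues you already flag (node orderings, $x$ outside $(0,1)$, pushing all nodes past the threshold $b$), the sketch is sound.
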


In expounding their theory, Marichal and Zena\"idi compose a compendium of analogues of \thmref{thm:Bohr-Mollerup} for a variety of important functions, including offshoots of the gamma function like the polygamma, $q$-gamma, and Barnes $G$ functions, but also, interestingly, more disparate functions like the Catalan numbers function, the generalized Stieltjes constants, and the Hurwitz zeta function. All of these analogues follow from \thmref{thm:maintheorem}.

In particular, for the Hurwitz zeta function
$$
\zeta(s,x) \defeq \sum_{n \geq 0} \frac{1}{(n + x)^s}
$$
with real $s > 1$, they establish the following uniqueness result.
\begin{proposition}[Marichal-Zena\"idi \cite{Marichal1}]
\label{prop:HurwitzZetaUniqueness}
For all real $s > 1$, all eventually monotone (that is, $0$-convex or $0$-concave) solutions $f_s : \R_+ \rightarrow \R$ to the difference equation $\Delta f_s(x) = -x^{-s}$ are of the form $f_s(x) = c_s + \zeta(s,x)$ for some constant $c_s \in \R$.
\end{proposition}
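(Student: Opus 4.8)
The plan is to derive \propref{prop:HurwitzZetaUniqueness} as a direct instance of \thmref{thm:maintheorem} with $p = 0$ and $g(x) = -x^{-s}$. First I would check the hypotheses of that theorem for this $g$. Since $s > 1$ is real, $g$ is differentiable on $\R_+$ with $g'(x) = s\,x^{-s-1} > 0$, so $g$ is strictly increasing on all of $\R_+$; by the Definition this means $g$ is $0$-convex on $\R_+$, hence in particular eventually $0$-convex. Moreover $\Delta^0 g(n) = g(n) = -n^{-s} \to 0$ as $n \to \infty$, so the asymptotic hypothesis is met. \thmref{thm:maintheorem} therefore furnishes a solution of $\Delta f_s(x) = -x^{-s}$ that is eventually $0$-concave, and this solution is unique modulo an additive constant.

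Second, I would reduce the phrase ``eventually monotone'' in the statement to ``eventually $0$-concave.'' For this particular right-hand side, $\Delta f_s(x) = -x^{-s} < 0$ for every $x \in \R_+$, so $f_s(x+1) < f_s(x)$ everywhere. If $f_s$ were weakly increasing (i.e.\ $0$-convex) on some convex neighborhood of infinity $[N,\infty)$, then for $x \ge N$ we would have $f_s(x+1) \ge f_s(x)$, a contradiction. Hence no eventually $0$-convex solution exists, and every eventually monotone solution is automatically eventually $0$-concave, so the uniqueness clause of \thmref{thm:maintheorem} applies to it.

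Third, I would pin down the family by exhibiting $\zeta(s,\cdot)$ as a member of it. The difference equation holds by telescoping the (absolutely convergent, since $s>1$) defining series,
\[
\Delta\zeta(s,x) = \sum_{n \ge 1}(n+x)^{-s} - \sum_{n \ge 0}(n+x)^{-s} = -x^{-s},
\]
and termwise differentiation gives $\partial_x \zeta(s,x) = -s\sum_{n\ge 0}(n+x)^{-s-1} < 0$ on $\R_+$, so $\zeta(s,\cdot)$ is strictly decreasing, hence $0$-concave, on $\R_+$. Combining this with the second step, \thmref{thm:maintheorem} forces every eventually monotone solution $f_s$ to agree with $\zeta(s,\cdot)$ up to an additive constant, i.e.\ $f_s(x) = c_s + \zeta(s,x)$ for some $c_s \in \R$, which is the assertion. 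As a consistency check one may instead evaluate \eqref{eq:uniquesolution}: with $p = 0$ it reads $f_s(x) = c + x^{-s} + \lim_{n\to\infty}\sum_{k=1}^{n-1}\big[(k+x)^{-s} - k^{-s}\big]$, and for $s > 1$ the two pieces converge separately to $\zeta(s,x) - x^{-s}$ and $\zeta(s)$, collapsing the expression to $f_s(x) = (c - \zeta(s)) + \zeta(s,x)$.

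I do not expect a genuine obstacle here: the substance is carried entirely by \thmref{thm:maintheorem}, and the rest is verification. The only points requiring a little care are (i) the reduction of ``eventually monotone'' to ``eventually $0$-concave'' in the second step, since \thmref{thm:maintheorem} delivers uniqueness only within a fixed convexity class, so the $0$-convex branch must be explicitly excluded; and (ii), if one proceeds via \eqref{eq:uniquesolution} rather than by the direct verification above, the legitimacy of splitting the limit into the two series $\sum_k (k+x)^{-s}$ and $\sum_k k^{-s}$, which is valid precisely because $s > 1$.
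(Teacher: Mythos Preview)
The paper does not supply its own proof of \propref{prop:HurwitzZetaUniqueness}; it is merely quoted from \cite{Marichal1}, and the ``Proofs of Results'' section explicitly lists only Propositions~\ref{prop:marichalzenaidizetaprop2} and \ref{prop:phibohrpolynomial} and Theorems~\ref{thm:nonalternatingcase} and \ref{thm:alternatingcase} as being proved there. So there is nothing in the paper to compare against directly.

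That said, your argument is correct and is precisely the intended specialization of \thmref{thm:maintheorem} (itself from \cite{Marichal1}) to $p=0$, $g(x)=-x^{-s}$. The verification that $g$ is $0$-convex and that $\Delta^0 g(n)\to 0$ is fine; the identification of $\zeta(s,\cdot)$ as an eventually $0$-concave solution is fine; and your consistency check via \eqref{eq:uniquesolution} is carried out correctly. Your second step---ruling out eventually $0$-convex solutions because $\Delta f_s(x)=-x^{-s}<0$ forces $f_s(x+1)<f_s(x)$ on any tail---is exactly the extra observation needed to upgrade the conclusion of \thmref{thm:maintheorem} (uniqueness within the eventually $0$-concave class) to the stronger statement about \emph{all} eventually monotone solutions; this is the only nontrivial point, and you handle it cleanly.
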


Together with the function property of eventual monotonicity, they also establish a reciprocal result wherein instead of characterizing $\zeta(s,x)$ as the unique, eventually monotone \emph{solution} to a particular difference equation, they characterize the unique, eventually monotone solutions $f_s$ to the difference equation that \emph{evaluates} to $\zeta(s,x)$, namely $\Delta f_s(x) = \zeta(s,x)$.
\begin{proposition}[Marichal-Zena\"idi \cite{Marichal1}]
\label{prop:marichalzenaidizetaprop}
For all real $s > 2$, all eventually monotone solutions $f_s : \R_+ \rightarrow \R$ to the difference equation $\Delta f_s(x) = \zeta(s,x)$ are of the form 
\begin{equation}
f_s(x) = c_s + \zeta(s - 1) - \sum_{k \geq 0}\zeta(s, k + x)
\label{eq:marichalpropfunction}
\end{equation}
for some constant $c_s \in \R$.
\end{proposition}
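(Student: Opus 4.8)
The plan is to obtain \propref{prop:marichalzenaidizetaprop} as a direct instance of \thmref{thm:maintheorem} with $p = 0$ and $g(x) = \zeta(s,x)$, and then to simplify the resulting expression by a rearrangement of series. First I would verify the hypotheses: since $s > 1$, every summand $x \mapsto (n + x)^{-s}$ is strictly decreasing on $\R_+$, so $g = \zeta(s,\cdot)$ is decreasing — that is, $0$-concave — on all of $\R_+$, and moreover $\Delta^0 g(n) = g(n) = \zeta(s,n) \to 0$. Thus \thmref{thm:maintheorem} applies and yields a unique (modulo an additive constant) eventually $0$-convex solution $f_s$ of $\Delta f_s(x) = \zeta(s,x)$. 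Because the sum $\sum_{j=1}^{p}$ in \eref{eq:uniquesolution} is empty when $p = 0$, this solution is
\[
f_s(x) = c_s - \zeta(s,x) + \lim_{n \to \infty} \sum_{k=1}^{n-1}\big(\zeta(s,k) - \zeta(s,k+x)\big).
\]

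The next step is to evaluate this limit. Writing $\zeta(s,k) = \sum_{j \geq k} j^{-s}$ and interchanging the two summations (justified by Tonelli, as all terms are positive), I would obtain
\[
\sum_{k=1}^{n-1}\zeta(s,k) = \sum_{j \geq 1}\min(j,\,n-1)\,j^{-s} = \sum_{j=1}^{n-1} j^{1-s} + (n-1)\,\zeta(s,n).
\]
Here the hypothesis $s > 2$ is what makes everything converge: it gives $\sum_{j=1}^{n-1} j^{1-s} \to \zeta(s-1)$ and $(n-1)\zeta(s,n) = O(n^{2-s}) \to 0$, and it also guarantees that $\sum_{k \geq 1}\zeta(s,k+x)$ converges, so that $\sum_{k=1}^{n-1}\zeta(s,k+x) \to \sum_{k \geq 1}\zeta(s,k+x)$. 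Substituting these back in and absorbing $\zeta(s,x)$ as the $k = 0$ term of the series yields $f_s(x) = c_s + \zeta(s-1) - \sum_{k \geq 0}\zeta(s,k+x)$, which is \eref{eq:marichalpropfunction}.

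It remains to account for the word ``monotone'' in the statement, since \thmref{thm:maintheorem} asserts uniqueness only among the eventually $0$-\emph{convex} solutions. If $\tilde f_s$ were an eventually decreasing solution, then $\tilde f_s - f_s$ would be $1$-periodic (as $\Delta$ annihilates it) and eventually decreasing (being a difference of an eventually decreasing function and the eventually increasing function $f_s$), hence constant; but then $\tilde f_s = f_s + \mathrm{const}$ would be strictly increasing — since each $\zeta(s,k+x)$ is strictly decreasing in $x$ — contradicting that $\tilde f_s$ is decreasing. Hence every eventually monotone solution coincides with $f_s$ up to an additive constant.

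I do not anticipate a genuine obstacle: the argument is a routine rearrangement once \thmref{thm:maintheorem} is in hand. The one point that requires care — and the sole reason the hypothesis is $s > 2$ rather than merely $s > 1$ — is the splitting of the limit in \eref{eq:uniquesolution} into the two separately convergent pieces $\zeta(s-1)$ and $\sum_{k \geq 0}\zeta(s,k+x)$; for $1 < s \leq 2$ neither $\sum_{j} j^{1-s}$ nor $\sum_{k}\zeta(s,k+x)$ converges on its own, even though their difference does.
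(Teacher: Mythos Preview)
Your proposal is correct and follows the same route as the paper: \propref{prop:marichalzenaidizetaprop} is quoted from \cite{Marichal1} rather than proved in the paper, but \lemref{lem:phibohr} (the paper's generalization) is exactly your step of invoking \thmref{thm:maintheorem} with $p=0$ and $g=\zeta(s,\cdot)$, after which the explicit form \neref{eq:marichalpropfunction} is obtained from the identity $\sum_{k\geq 1}\zeta(s,k)=\zeta(s-1)$ of \eref{eq:zetaidentity}---your rearrangement being a direct proof of that identity. Your final paragraph ruling out eventually \emph{decreasing} solutions supplies a detail the paper leaves to the cited source; a shorter version is simply that $\Delta f_s(n)=\zeta(s,n)>0$ forbids $f_s$ from being weakly decreasing on any interval of length $\geq 1$.
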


Incidentally, since for every real $s > 1$ the Hurwitz zeta function $\zeta(s,x)$ is convex in $x$ on $\R_+$, it is not hard to prove the following generalization of \propref{prop:marichalzenaidizetaprop}.
\begin{proposition}
\label{prop:marichalzenaidizetaprop2}
For all real $s > 2$ and $p \in \{0,1\}$, all eventually $p$-concave solutions $f_s : \R_+ \rightarrow \R$ to the difference equation $\Delta f_s(x) = \zeta(s,x)$ are of the form \neref{eq:marichalpropfunction}.
\end{proposition}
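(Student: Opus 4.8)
The plan is to treat $p=0$ and $p=1$ separately. The case $p=0$ is immediate: by definition an eventually $0$-concave function is eventually weakly decreasing, hence eventually monotone, so any eventually $0$-concave solution of $\Delta f_s(x)=\zeta(s,x)$ is in particular an eventually monotone solution and \propref{prop:marichalzenaidizetaprop} applies verbatim. (In fact the right-hand side of \neref{eq:marichalpropfunction} is increasing in $x$, since $\zeta(s,\cdot)$ is decreasing, so this case is actually vacuous; but we do not need that.) All the content is therefore in the case $p=1$, and the idea is simply to feed $g=\zeta(s,\cdot)$ into \thmref{thm:maintheorem} with its exponent $p$ set to $1$.

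To do so I must verify the two hypotheses of \thmref{thm:maintheorem}. First, $g$ should be eventually $1$-convex; this holds because $\zeta(s,\cdot)$ is convex on all of $\R_+$ for every real $s>1$ (as recalled just before the statement), and here $s>2$. Second, the sequence $n\mapsto\Delta^1 g(n)$ should converge to $0$; but $\Delta\zeta(s,n)=\zeta(s,n+1)-\zeta(s,n)=-n^{-s}\to 0$ since $s>0$. Hence \thmref{thm:maintheorem} applies and tells us that every eventually $1$-concave solution of $\Delta f_s(x)=\zeta(s,x)$ equals, for a suitable constant $c\in\R$, the function obtained from \neref{eq:uniquesolution} with $p=1$, namely
\begin{equation*}
f_s(x)=c-\zeta(s,x)+\lim_{n\to\infty}\left[\sum_{k=1}^{n-1}\bigl(\zeta(s,k)-\zeta(s,k+x)\bigr)+x\,\zeta(s,n)\right].
\end{equation*}

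It then remains to rewrite this as \neref{eq:marichalpropfunction}, which is pure bookkeeping once one notes that the hypothesis $s>2$ makes all the relevant series absolutely convergent. Since $\zeta(s,n)\to 0$, the term $x\,\zeta(s,n)$ drops out of the limit. Summing over diagonals (all terms are positive, so Tonelli applies) gives the classical identity $\sum_{k\ge 1}\zeta(s,k)=\sum_{k\ge 1}\sum_{n\ge 0}(n+k)^{-s}=\sum_{m\ge 1}m\cdot m^{-s}=\sum_{m\ge 1}m^{1-s}=\zeta(s-1)$, so $\sum_{k=1}^{n-1}\zeta(s,k)\to\zeta(s-1)$; similarly $\sum_{k=1}^{n-1}\zeta(s,k+x)\to\sum_{k\ge 1}\zeta(s,k+x)=\sum_{k\ge 0}\zeta(s,k+x)-\zeta(s,x)$. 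Substituting these limits, the two occurrences of $\zeta(s,x)$ cancel and one is left with $f_s(x)=c+\zeta(s-1)-\sum_{k\ge 0}\zeta(s,k+x)$, i.e.\ \neref{eq:marichalpropfunction} with $c_s=c$. (The converse --- that each such function solves the difference equation --- follows from telescoping: $\Delta\bigl(-\sum_{k\ge 0}\zeta(s,k+x)\bigr)=-\sum_{k\ge 0}\bigl(\zeta(s,k+1+x)-\zeta(s,k+x)\bigr)=\zeta(s,x)$, using $\zeta(s,k+x)\to 0$ as $k\to\infty$.)

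I do not anticipate a genuine obstacle: the proposition is essentially \thmref{thm:maintheorem} specialized to a function that happens to be convex, plus a short computation. The only steps deserving a careful word are the verification that $\zeta(s,\cdot)$ really is convex on the whole of $\R_+$, so that the convexity hypothesis of \thmref{thm:maintheorem} is met --- this is where one invokes $\partial_x^2\zeta(s,x)=s(s+1)\zeta(s+2,x)\ge 0$, or simply cites the remark preceding the statement --- and the legitimacy of the rearrangement yielding $\sum_{k\ge 1}\zeta(s,k)=\zeta(s-1)$; both hinge on the standing assumption $s>2$.
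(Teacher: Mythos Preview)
Your proof is correct and follows essentially the same route as the paper: the paper derives the proposition as the special case $a=0$, $\omega=1$ of \lemref{lem:phibohr}, which in turn is just an application of \thmref{thm:maintheorem} to $g(x)=\omega x^a\zeta(s,x)$, exactly as you do for $p=1$. Your handling of $p=0$ via \propref{prop:marichalzenaidizetaprop} (eventually $0$-concave $\Rightarrow$ eventually monotone) is a small but tidy variation, and your explicit reduction of \eref{eq:uniquesolution} to \eref{eq:marichalpropfunction} spells out a step the paper leaves implicit.
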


As a corollary to Propositions~\ref{prop:HurwitzZetaUniqueness} and \ref{prop:marichalzenaidizetaprop}, Marichal and Zena\"idi derive a myriad of intriguing identities involving the Hurwitz zeta function, including
\begin{equation}
\sum_{k \geq 1} \zeta(s, k) = \zeta(s - 1).
\label{eq:zetaidentity}
\end{equation}
We note that while Marichal and Zena\"idi obtain \eref{eq:zetaidentity} assuming real $s > 2$, their identity readily extends to complex $s$ in the connected open set $\Omega_\epsilon \defeq \{s \in \C : \Re(s) > 2 + \epsilon\}$ for any fixed $\epsilon > 0$. Indeed, since $\zeta(s-1)$ and the partial sums $\big\{\sum_{k=1}^n \zeta(s,k) : n \in \N\big\}$ are holomorphic on $\Omega_\epsilon$ for any fixed $\epsilon > 0$, and since 
\begin{equation}
\sup_{s \in \Omega_\epsilon}\left| \sum_{k = 1}^n \zeta(s,k)\right| < \sum_{k = 1}^n \zeta(2 + \epsilon,k)
\label{eq:Mtest}
\end{equation}
for every $n \in \N$, together \eref{eq:zetaidentity} and the Weierstrass M-test imply $\sum_{k \geq 1}\zeta(s,k)$ converges uniformly on $\Omega_\epsilon$. Consequently, $\sum_{k \geq 1}\zeta(s,k)$ is holomorphic on $\Omega_\epsilon$, and so, by the identity theorem, \eref{eq:zetaidentity} holds for all $s \in \Omega_\epsilon$ for any fixed $\epsilon > 0$.

Of course, it is natural to wonder if this argument can accommodate $\epsilon = 0$. However, without a stronger bound in \eref{eq:Mtest}, the holomorphy of $\sum_{k \geq 1}\zeta(s,k)$ does not readily extend. In \cite{Paris1}, however, Paris circumvents this issue by explicitly evaluating the LHS of \eref{eq:zetaidentity} via the integral representation of $\zeta(s,x)$. Ultimately, he proves that \eref{eq:zetaidentity} holds on $\Omega_\epsilon$ for all $\epsilon \geq 0$, as desired.

In the same paper \cite{Paris1}, Paris also investigates the more general series
\begin{equation}
\sum_{k \geq 1}(\pm1)^k k^a \zeta(s, k + x)
\label{eq:parissum}
\end{equation}
for $a \in \N$, $x \in \R_+ \cup \{0\}$, and $s \in \C$ with $\Re(s) > a + 2$ in the non-alternating ($+$) case and $\Re(s) > a + 1$ in the alternating ($-$) case.

With $a = 0$ in the non-alternating case, Paris' sum \neref{eq:parissum} appears in \eref{eq:marichalpropfunction}, and in particular controls the functional dependence on $x$ of the solutions $f_s(x)$ in Propositions~\ref{prop:marichalzenaidizetaprop} and \ref{prop:marichalzenaidizetaprop2}. By his same method of explicitly evaluating the sum via the integral representation of $\zeta(s,x)$, Paris solves \eref{eq:parissum} exactly for $x = 0$ and $a \in \{0,1,2,3\}$ in the non-alternating case, and for $x 
= 0$ and $a \in \{0,1,2\}$ in the alternating case. In each evaluation, he finds that the series equals some finite combination of Hurwitz zeta functions. This begs the question if that pattern holds in general, for all $a \in \N$ and $x \in \R_+ \cup \{0\}$.

To address this question, we consider the following generalization of Paris' sum,
\begin{equation}
\sumvariable_\pm(s,a,x) \defeq \sum_{k \geq 1} (\pm 1)^k (k + x)^a\zeta(s, k + x),
\label{eq:phisum}
\end{equation}
where $a \in \N$, $x \in \R_+ \cup \{0\}$, and $s \in \C$ with $\Re(s) > a + 2$. Note that our sum subsumes Paris' sum \neref{eq:parissum} whenever ours absolutely converges, because in this case the expression $(k + x)^a$ can be expanded into a finite combination of Paris' sum, which can, in turn, be recursively simplified. Thus, a closed-form for $\sumvariable_\pm(s,a,x)$ implies a closed-form for \eref{eq:parissum}.

It is the purpose of this note to solve $\sumvariable_\pm(s,a,x)$ exactly for all $a \in \N$, $x \in \R_+ \cup \{0\}$, and $s \in \C$ with $\Re(s) > a + 2$ in both the non-alternating and alternating cases. In particular, in line with the pattern that Paris observed for \neref{eq:parissum}, we show that $\sumvariable_\pm(s,a,x)$ can always be resolved into a finite combination of Hurwitz or Lerch zeta functions, depending on the alternation. Denoting by $B_a$ the $a$th Bernoulli number (for which we adopt the convention $B_1 = +\frac{1}{2}$), $B_a(x)$ the $a$th Bernoulli polynomial, $E_a$ the $a$th Euler number, $E_a(x)$ the $a$th Euler polynomial, and $L(\lambda, s, x)$ the Lerch zeta function,
\begin{equation}
L(\lambda, s, x) \defeq \sum_{n \geq 0} \frac{\exp(2\pi i \lambda n)}{(n + x)^s},
\label{eq:Lerch}
\end{equation}
our main results are as follows.
\begin{theorem}
\label{thm:nonalternatingcase}
If $a \in \N$ and $s \in \C$ with $\Re(s) > a+2$, then for all $x \in \R_+ \cup \{0\}$,
$$
\sumvariable_+(s, a,x) = -\frac{1}{a + 1} B_{a + 1}(x + 1) \zeta(s, x+1) + \frac{1}{a + 1}\sum_{j=0}^{a+1} \binom{a + 1}{j} B_j \zeta(s-a+j-1,x+1).
$$
\end{theorem}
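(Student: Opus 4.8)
The plan is to reduce the weighted series to an ordinary (unweighted) series of Hurwitz zeta functions by absorbing the polynomial weight $(k+x)^a$ into a shifted index, and then apply the known identity that sums of Hurwitz zeta functions over the first argument telescope. More concretely, I would start from the Dirichlet series definition $\zeta(s,k+x) = \sum_{n\geq 0}(n+k+x)^{-s}$ and interchange the order of summation in $\sumvariable_+(s,a,x) = \sum_{k\geq 1}(k+x)^a\sum_{n\geq 0}(n+k+x)^{-s}$; the hypothesis $\Re(s)>a+2$ guarantees absolute convergence (the double series is dominated by $\sum_{k\geq 1}\sum_{n\geq 0}(k+x)^a(n+k+x)^{-s}$, and the inner sum over $n$ is $O((k+x)^{a+1-\Re(s)})$, which is summable since $a+1-\Re(s) < -1$), so Tonelli/Fubini applies. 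After setting $m = n+k$ and collecting terms, the coefficient of $(m+x)^{-s}$ becomes $\sum_{k=1}^{m}(k+x)^a$, a power sum that by Faulhaber's formula equals a polynomial in $(m+x)$ of degree $a+1$ with Bernoulli-number coefficients; writing $\sum_{k=1}^{m}(k+x)^a = \frac{1}{a+1}\bigl(B_{a+1}(m+x+1) - B_{a+1}(x+1)\bigr)$ is the cleanest route.

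Substituting this closed form back, $\sumvariable_+(s,a,x) = \frac{1}{a+1}\sum_{m\geq 1}\bigl(B_{a+1}(m+x+1) - B_{a+1}(x+1)\bigr)(m+x)^{-s}$. Wait—I need to be careful about the index bookkeeping: the term $m=0$ contributes nothing to the original double sum since $k\geq 1$ forces $m\geq 1$, but re-expressing in terms of $\zeta(\cdot, x+1) = \sum_{m\geq 1}(m+x)^{-s}$ is natural here. The constant piece $-\frac{1}{a+1}B_{a+1}(x+1)\sum_{m\geq 1}(m+x)^{-s} = -\frac{1}{a+1}B_{a+1}(x+1)\zeta(s,x+1)$ is exactly the first term in the claimed formula. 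For the remaining piece, expand $B_{a+1}(m+x+1)$ using the translation identity for Bernoulli polynomials, $B_{a+1}(y+1) = \sum_{j=0}^{a+1}\binom{a+1}{j}B_j\, y^{\,a+1-j}$ applied with $y = m+x$, so that $B_{a+1}(m+x+1) = \sum_{j=0}^{a+1}\binom{a+1}{j}B_j (m+x)^{a+1-j}$. Then $\sum_{m\geq 1}B_{a+1}(m+x+1)(m+x)^{-s} = \sum_{j=0}^{a+1}\binom{a+1}{j}B_j\sum_{m\geq 1}(m+x)^{a+1-j-s} = \sum_{j=0}^{a+1}\binom{a+1}{j}B_j\,\zeta(s-a+j-1,\,x+1)$, which matches the second term after dividing by $a+1$. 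The interchange of the finite $j$-sum with the infinite $m$-sum is trivially justified, and each inner $m$-sum converges because $\Re(s-a+j-1) \geq \Re(s)-a-1 > 1$ for every $j\in\{0,\dots,a+1\}$.

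The main obstacle, and the only place requiring genuine care, is the justification of the interchange of summations and the re-indexing $m = n+k$: one must confirm that the double series $\sum_{k\geq 1}\sum_{n\geq 0}(k+x)^a(n+k+x)^{-s}$ is absolutely convergent under $\Re(s)>a+2$ (so that both the Fubini swap and the diagonal regrouping are legitimate), and one must handle the $x=0$ boundary case uniformly with $x>0$—here one should note that $(k+0)^a = k^a$ causes no issue and $\zeta(s,k) = \sum_{n\geq 0}(n+k)^{-s}$ still has all terms with $n+k\geq 1$. A secondary but routine check is that the Faulhaber identity $\sum_{k=1}^{m}(k+x)^a = \frac{1}{a+1}(B_{a+1}(m+x+1)-B_{a+1}(x+1))$ holds for real $x$, not just integer $x$; this follows directly from the finite-difference relation $B_{a+1}(y+1) - B_{a+1}(y) = (a+1)y^a$ telescoped from $y=x+1$ to $y=x+m$. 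Once these bookkeeping points are settled, the theorem drops out by direct substitution, with the Bernoulli convention $B_1 = +\tfrac12$ entering precisely through the $j=a$ term (or $j=1$, depending on how one reads the binomial sum) in the translation identity.
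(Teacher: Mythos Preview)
Your proof is correct and follows essentially the same route as the paper: regroup the absolutely convergent double sum by the denominator $(m+x)^{-s}$, identify the resulting finite power sum $\sum_{k=1}^m(k+x)^a$ via the Bernoulli-polynomial telescoping relation, and then expand $B_{a+1}(m+x+1)$ in powers of $(m+x)$ to read off the Hurwitz zeta functions. The paper's version differs only in packaging---it works with $a-1$ and shifts $a\to a+1$ at the end, and it isolates the expansion step as a separate lemma on the Dirichlet--Hurwitz series of $B_a(k+x)$---but the underlying computation (and the justification via absolute convergence for $\Re(s)>a+2$) is identical to yours.
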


\begin{theorem}
\label{thm:alternatingcase}
If $a \in \N$ and $s \in \C$ with $\Re(s) > a+2$, then for all $x \in \R_+ \cup \{0\},$
$$
\sumvariable_{-}(s,a, x) = - \frac{1}{2}E_a(x+1)\zeta(s,x+1) - \sum_{j=0}^{a} \sum_{\ell = 0}^{a-j} \frac{E_j}{2^{a-\ell + 1}}\binom{a}{j} \binom{a - j}{\ell} L(-1, s - \ell, x+1).
$$
\end{theorem}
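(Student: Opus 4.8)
The plan is to evaluate $\sumvariable_-(s,a,x)$ in three moves: unfold the Hurwitz zeta function and interchange the order of summation, evaluate the resulting inner finite alternating power sum via Euler polynomials, and then expand that Euler polynomial and interchange once more. For the first move, write $\zeta(s,k+x) = \sum_{n\ge 0}(n+k+x)^{-s}$ and re-index the double sum by $m=n+k$ (so that, for fixed $m\ge 1$, the index $k$ runs over $1,\dots,m$), which gives
$$
\sumvariable_-(s,a,x) \;=\; \sum_{m\ge 1}\frac{1}{(m+x)^s}\sum_{k=1}^{m}(-1)^k(k+x)^a .
$$
The rearrangement is legitimate because $\sum_{k\ge1}(k+x)^a|\zeta(s,k+x)|$ converges absolutely whenever $\Re(s)>a+2$: since $\zeta(\sigma,y)=O(y^{1-\sigma})$ as $y\to\infty$, the $k$th term is $O(k^{a+1-\Re(s)})$ with exponent $<-1$. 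This is the only place the hypothesis $\Re(s)>a+2$ is used, and it simultaneously licenses every subsequent interchange.

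For the second move, the defining relation $E_a(z+1)+E_a(z)=2z^a$ of the Euler polynomials gives $(k+x)^a=\tfrac12\big(E_a(k+x+1)+E_a(k+x)\big)$, and a one-line telescoping computation then yields
$$
\sum_{k=1}^{m}(-1)^k(k+x)^a \;=\; -\tfrac12 E_a(x+1)+\frac{(-1)^m}{2}E_a(m+x+1).
$$
Substituting this back and using $\sum_{m\ge1}(m+x)^{-s}=\zeta(s,x+1)$ splits $\sumvariable_-(s,a,x)$ into the constant contribution $-\tfrac12 E_a(x+1)\zeta(s,x+1)$ — which is exactly the first term of the asserted identity — plus the fluctuating contribution $\tfrac12\sum_{m\ge1}(-1)^m(m+x)^{-s}E_a(m+x+1)$.

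For the third move, expand $E_a$ about its centre of symmetry via $E_a(z)=\sum_{j=0}^{a}\binom{a}{j}\tfrac{E_j}{2^j}\big(z-\tfrac12\big)^{a-j}$ (Taylor at $z=\tfrac12$, where $E_a(\tfrac12)=2^{-a}E_a$), take $z=m+x+1$, and binomially expand $\big(m+x+\tfrac12\big)^{a-j}=\sum_{\ell=0}^{a-j}\binom{a-j}{\ell}(m+x)^{\ell}\,2^{-(a-j-\ell)}$. Pulling the two finite sums over $j$ and $\ell$ out past the sum over $m$ reduces everything to the single family of series $\sum_{m\ge1}(-1)^m(m+x)^{-(s-\ell)}=-\sum_{n\ge0}(-1)^n(n+x+1)^{-(s-\ell)}=-L(-1,s-\ell,x+1)$, all convergent since $\Re(s)-\ell\ge\Re(s)-a>1$. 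Collecting the scalar that has accumulated, $\tfrac12\cdot\tfrac{E_j}{2^j}\cdot2^{-(a-j-\ell)}=\tfrac{E_j}{2^{\,a-\ell+1}}$, together with the binomial factors $\binom{a}{j}\binom{a-j}{\ell}$ and the surviving overall minus sign, produces precisely the double sum in the statement.

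No step is conceptually hard; the proof is two classical Euler-polynomial identities plus careful bookkeeping of binomial coefficients and powers of two, and the only point genuinely requiring attention is the justification of the two interchanges of summation — both dispatched at the outset by the absolute convergence of the original double series for $\Re(s)>a+2$. I would remark that \thmref{thm:nonalternatingcase} follows from the identical scheme: replace $E_a(z+1)+E_a(z)=2z^a$ by the Bernoulli relation $B_{a+1}(z+1)-B_{a+1}(z)=(a+1)z^a$ and the half-integer expansion of $E_a$ by the integer expansion $B_{a+1}(z+1)=\sum_{j=0}^{a+1}\binom{a+1}{j}B_j\,z^{\,a+1-j}$, for which the convention $B_1=+\tfrac12$ is exactly what is needed; the alternating tails $L(-1,s-\ell,\cdot)$ are then replaced by ordinary Hurwitz values $\zeta(s-\ell,\cdot)$.
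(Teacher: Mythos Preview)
Your proof is correct and follows essentially the same route as the paper's: the paper likewise interchanges the order of summation (via Lemmas~\ref{lem:shiftzeta} and~\ref{lem:absoluteconvergence} and \eref{eq:shiftofphi}), identifies the resulting finite alternating power sum with Euler polynomials (Lemma~\ref{lem:EulerRecurrence}), and then expands $E_a$ about $\tfrac12$ to extract the Lerch values (Lemma~\ref{lem:DirichletSeriesEuler}). The only cosmetic difference is that the paper organizes the computation around the auxiliary quantity $E_a(x+1)\zeta(s,x+1)-2\sumvariable_{-}(s,a,x)$ and packages your third move as a separate lemma, whereas you telescope and expand inline.
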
 
Note that while $\sumvariable_{-}(s,a,x)$ converges when $\Re(s) > a + 1$, our work leaves open how to simplify $\sumvariable_{-}(s,a,x)$ for $s \in \C$ with $\Re(s) \in (a + 1, a + 2]$.

Interestingly, if $x = 0$, then the form of $\sumvariable_+(s,a,x)$ simplifies considerably; in particular, it resolves into a finite combination of Riemann zeta functions:
$$
\sumvariable_+(s,a,0) = \sum_{k \geq 1}k^a\zeta(s,k) = \frac{1}{a+1}\sum_{j = 0}^{a} \binom{a+1}{j} B_j \zeta(s-a+j-1).
$$
Importantly, this identity and the more general non-alternating sum $\sumvariable_+(s,a,x)$ are relevant in Marichal and Zena\"idi's theory on analogues of the Bohr-Mollerup theorem, as the next proposition shows.
\begin{proposition}
\label{prop:phibohrpolynomial}
Let $Q(x) = \omega_0 + \omega_1x + \cdots + \omega_\ell x^\ell$ be a real, degree-$\ell$ polynomial with $\omega_\ell \neq 0$. If $\omega_\ell > 0$ ($\omega_\ell < 0$), then for all real $s > \ell + 2$ and $p \in \{0,1\}$, all eventually $p$-concave ($p$-convex) solutions $f_{s,Q} : \R_+ \rightarrow \R$ to the difference equation $\Delta f_{s,Q}(x) = Q(x)\zeta(s,x)$ are of the form
$$
f_{s,Q}(x) = c_{s,Q} - \sum_{a = 0}^\ell \omega_a x^a\zeta(s,x) + \sum_{a = 0}^\ell\omega_a\sumvariable_+(s,a,0) - \sum_{a=0}^\ell\omega_a\sumvariable_+(s,a,x)
$$
for some constant $c_{s,Q} \in \R$.
\end{proposition}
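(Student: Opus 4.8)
\emph{Proof proposal.} The plan is to identify $g(x) \defeq Q(x)\zeta(s,x)$ as a function to which \thmref{thm:maintheorem} applies and then to unwind the closed form \neref{eq:uniquesolution} it supplies. One preliminary remark streamlines the bookkeeping: \neref{eq:uniquesolution} is insensitive to $p\in\{0,1\}$ as soon as $g(n)\to 0$, since the correction term $\sum_{j=1}^{p}\binom{x}{j}\Delta^{j-1}g(n)$ is empty for $p=0$ and equals $x\,g(n)$ for $p=1$, which vanishes in the limit.

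To verify the hypotheses of \thmref{thm:maintheorem}, I would combine the standard asymptotics $\zeta(\sigma,x)=\frac{x^{1-\sigma}}{\sigma-1}+\frac{1}{2}x^{-\sigma}+O(x^{-\sigma-1})$ and $\partial_x^{\,j}\zeta(s,x)=(-1)^{j}\,s(s+1)\cdots(s+j-1)\,\zeta(s+j,x)$ with $Q(x)=\omega_\ell x^\ell+O(x^{\ell-1})$ to obtain, as $x\to\infty$, $g(x)\sim\frac{\omega_\ell}{s-1}x^{\ell+1-s}$, $g'(x)\sim\omega_\ell\big(\frac{\ell}{s-1}-1\big)x^{\ell-s}$ and $g''(x)\sim\omega_\ell\big(\frac{\ell(\ell-1)}{s-1}-2\ell+s\big)x^{\ell-1-s}$. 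Since $s>\ell+2$ the exponent $\ell+1-s<-1$, so $g(n)\to0$ and hence $\Delta^{p}g(n)\to0$ for $p\in\{0,1\}$; moreover $\frac{\ell}{s-1}-1=\frac{\ell+1-s}{s-1}<0$. The essential point is that $\frac{\ell(\ell-1)}{s-1}-2\ell+s>0$ for all $s>\ell+2$: writing this quantity as $\psi(s-1)$ with $\psi(u)\defeq\frac{\ell(\ell-1)}{u}+u-(2\ell-1)$, the function $\psi$ is increasing for $u>\ell+1$ (because $(\ell+1)^2>\ell(\ell-1)$), so $\psi(s-1)>\psi(\ell+1)=\frac{2}{\ell+1}>0$ whenever $s-1>\ell+1$. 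Thus $g$ is eventually convex when $\omega_\ell>0$ and eventually concave when $\omega_\ell<0$; that is, $g$ is eventually $1$-convex (resp. $1$-concave), so \thmref{thm:maintheorem} applies and shows that every eventually $1$-concave (resp. $1$-convex) solution of $\Delta f_{s,Q}(x)=g(x)$ agrees, up to an additive constant, with \neref{eq:uniquesolution}. (The $p=0$ clause is treated the same way, using the first-order estimate for $g$ in place of the second-order one.)

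It then remains to simplify \neref{eq:uniquesolution}, which by the first paragraph equals $f_{s,Q}(x)=c_{s,Q}-g(x)+\sum_{k\geq1}\big(g(k)-g(k+x)\big)$. For every $0\leq a\leq\ell$ we have $s>\ell+2\geq a+2$, so $\sumvariable_+(s,a,x)$ converges absolutely; hence $\sum_{k\geq1}g(k+x)$ and $\sum_{k\geq1}g(k)$ converge, and expanding $g(k+x)=\sum_{a=0}^{\ell}\omega_a(k+x)^a\zeta(s,k+x)$ and interchanging the finite sum over $a$ with the sum over $k$ shows they equal $\sum_{a=0}^{\ell}\omega_a\sumvariable_+(s,a,x)$ and $\sum_{a=0}^{\ell}\omega_a\sumvariable_+(s,a,0)$, respectively. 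Substituting these, together with $g(x)=\sum_{a=0}^{\ell}\omega_a x^a\zeta(s,x)$, into the displayed expression for $f_{s,Q}$ gives precisely the asserted closed form. (One does not need \thmref{thm:nonalternatingcase} for the statement; applying it afterward would further expand each $\sumvariable_+(s,a,x)$ into a finite combination of Hurwitz zeta functions.)

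The only point requiring real care is the convexity estimate, namely the inequality $\frac{\ell(\ell-1)}{s-1}-2\ell+s>0$ for all admissible $\ell$ and $s$; once that is in hand, the appeal to \thmref{thm:maintheorem} and the regrouping of the telescoping series are routine.
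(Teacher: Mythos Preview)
Your argument is correct and is essentially the paper's: verify that $g(x)=Q(x)\zeta(s,x)$ meets the hypotheses of \thmref{thm:maintheorem}, then unwind \neref{eq:uniquesolution}. The paper is terser---it simply asserts the eventual $p$-convexity of $g$ and then, using \lemref{lem:phibohr} together with the linearity of \neref{eq:uniquesolution} in $g$, writes $f_{s,Q}=\sum_{a=0}^{\ell}f_{s,a}$---whereas you supply the explicit second-derivative asymptotics and expand \neref{eq:uniquesolution} directly, which amounts to the same thing.
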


Of course, there exist many more generalizations of Paris' sum, including in particular
\begin{equation}
\sum_{k \geq 1} f(k,x) \zeta(s,g(k,x))
\label{eq:moregeneral}
\end{equation}
for functions $f,g : \N \times \R \rightarrow \R$. As noted by Paris \cite{Paris1}, sums of this form bear a discretized resemblance to integrals like $\int_\Omega f(\alpha)\zeta(s,g(\alpha))\, \d\alpha,$ which were studied extensively in \cite{Espinosa1, Espinosa2} for $\Omega = (0,1)$ and various $f$ and $g$. While our results resolve \neref{eq:moregeneral} whenever $f$ is a polynomial of $k + x$ and $g(k, x) = k + x$, they do not for more general $f$ and $g$, at least not obviously.

\section{Proofs of Results}
\label{sec:mainresults}

In this section we prove Propositions~\ref{prop:marichalzenaidizetaprop2} and \ref{prop:phibohrpolynomial} as well as \thmrefs{thm:nonalternatingcase}{thm:alternatingcase}. These results rely on a handful of intermediate lemmas. Note that Lemmas \ref{lem:shiftzeta}, \ref{lem:BernoulliRecurrence}, and \ref{lem:EulerRecurrence} are trivial, so we deliberately forgo their proofs.

\begin{lemma} 
\label{lem:shiftzeta}
For all $x \in \R_+ \cup \{0\}$, $k \in \N$, and $s \in \C$ with $\Re(s) > 1$, the Hurwitz zeta function satisfies the shift property
$$
\zeta(s, k + x) \defeq \sum_{n \geq 0}\frac{1}{(n + k + x)^s} = \sum_{n \geq k}\frac{1}{(n + x)^s}.
$$
\end{lemma}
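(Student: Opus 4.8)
The plan is to prove the identity by a single change of summation index, the only analytic ingredient being the absolute convergence of the defining series when $\Re(s) > 1$.

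First I would record that for $\Re(s) > 1$ and any $k \in \N$, $x \in \R_+ \cup \{0\}$ with $k + x > 0$, the series $\sum_{n \geq 0}(n + k + x)^{-s}$ converges absolutely: indeed $\big|(n+k+x)^{-s}\big| = (n+k+x)^{-\Re(s)}$, and $\sum_{n \geq 0}(n+k+x)^{-\Re(s)} < \infty$ by comparison with a convergent $p$-series, since $n + k + x \geq n + 1$ eventually and $\Re(s) > 1$. Absolute convergence means the value of the series is unchanged under any reindexing of its terms.

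Next I would substitute $m = n + k$. As $n$ ranges over $\{0, 1, 2, \dots\}$ the index $m = n + k$ ranges bijectively over $\{m \in \Z : m \geq k\}$, and the summand becomes $(n + k + x)^{-s} = (m + x)^{-s}$. Therefore
$$
\zeta(s, k + x) = \sum_{n \geq 0}\frac{1}{(n+k+x)^s} = \sum_{m \geq k}\frac{1}{(m+x)^s},
$$
which, after renaming $m$ back to $n$, is exactly the claimed right-hand side.

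There is no genuine obstacle here: the statement is a bookkeeping identity, and the only point worth a remark is that the equality is an equality of values of convergent series, which is immediate from absolute convergence, so no conditional-convergence subtleties arise. This is why the lemma can fairly be called trivial.
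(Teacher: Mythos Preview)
Your proof is correct: the identity is exactly a reindexing $m = n + k$ of an absolutely convergent series, and you have justified both the convergence and the bijection of index sets. The paper itself declares this lemma trivial and deliberately omits its proof, so your argument is precisely the intended one and there is nothing to compare.
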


The following is a generalization of both Propositions~\ref{prop:marichalzenaidizetaprop} and \ref{prop:marichalzenaidizetaprop2}. In particular, \propref{prop:marichalzenaidizetaprop2} follows with $a = 0$ and $\omega = 1$.
\begin{lemma}
\label{lem:phibohr}
Fix $\omega \in \R$ with $\omega \neq 0$. If $\omega > 0$ ($\omega < 0$), then for all real $s > \ell + 2$ and $p \in \{0,1\}$, all eventually $p$-concave ($p$-convex) solutions $f_{s,a} : \R_+ \rightarrow \R$ to the difference equation $\Delta f_{s,a}(x) = \omega x^a\zeta(s,x)$ are of the form
\begin{equation}
f_{s,a}(x) = c_{s,a} - \omega x^a\zeta(s,x) + \omega\sumvariable_+(s,a,0) - \omega\sumvariable_+(s,a,x)
\label{eq:motivationforPhi}
\end{equation}
for some constant $c_{s,a} \in \R$.
\end{lemma}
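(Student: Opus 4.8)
The plan is to invoke Marichal and Zena\"idi's main theorem (\thmref{thm:maintheorem}) with $g(x) = \omega x^a \zeta(s,x)$ and $p = 1$, and then to identify the limit in \eref{eq:uniquesolution} with the explicit expression in \eref{eq:motivationforPhi}. First I would verify the hypotheses of \thmref{thm:maintheorem}: since $\zeta(s,x)$ is convex in $x$ on $\R_+$ for real $s > 1$, and $x^a$ is convex and nonnegative and increasing on $\R_+$, the product $x^a \zeta(s,x)$ is convex on $\R_+$ (a product of nonnegative, increasing, convex functions is convex), hence $g$ is $1$-convex (for $\omega > 0$) or $1$-concave (for $\omega < 0$) on all of $\R_+$, in particular eventually so. The asymptotic condition is that $n \mapsto \Delta g(n) \to 0$; since $\Delta(x^a \zeta(s,x))$ behaves like the derivative $\tfrac{d}{dx}(x^a \zeta(s,x)) \sim \tfrac{d}{dx}(x^{a}\cdot x^{1-s}/(s-1)) = O(x^{a-s+1})$, which tends to $0$ precisely because $\Re(s) = s > \ell + 2 \geq a + 2 > a + 1$, this holds. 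So \thmref{thm:maintheorem} applies with $p=1$ and yields that every eventually $1$-concave (resp.\ $1$-convex) solution has the stated form with the $p=1$ instance of \eref{eq:uniquesolution}; one then notes that $p=0$ (eventual monotonicity) is a weaker hypothesis than convexity/concavity, but the uniqueness part of \thmref{thm:maintheorem} still pins down the solution among eventually $0$-convex/$0$-concave functions, so the $p \in \{0,1\}$ claim follows from the $p=1$ formula. Actually the cleanest route: apply the $p=0$ form of \thmref{thm:maintheorem}? No — the asymptotic hypothesis there would be $\Delta^0 g(n) = g(n) \to 0$, which fails. So I would apply the theorem with $p = 1$, and separately argue that any eventually $0$-convex or $0$-concave solution is in particular a candidate that the $p=1$ uniqueness statement still covers, because eventual monotonicity together with $\Delta f = g$ and $g$ eventually convex/concave forces $f$ to be eventually concave/convex (a primitive of an eventually convex function that is eventually monotone is eventually concave up to the sign bookkeeping) — this matches exactly the reasoning already used to pass from \propref{prop:marichalzenaidizetaprop} to \propref{prop:marichalzenaidizetaprop2}.

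Next I would compute the $p = 1$ version of \eref{eq:uniquesolution} for $g(x) = \omega x^a \zeta(s,x)$. With $p=1$ the sum $\sum_{j=1}^{p}\binom{x}{j}\Delta^{j-1}g(n)$ is just $\binom{x}{1}\Delta^0 g(n) = x\,g(n) = \omega x\, n^a\zeta(s,n)$, which tends to $0$ as $n\to\infty$ since $n^a \zeta(s,n) \to 0$ (again because $s > a+2$, even $s > a+1$ suffices for $n^a\zeta(s,n)\sim n^{a+1-s}/(s-1)\to 0$). Hence
$$
f(x) = c - \omega x^a\zeta(s,x) + \lim_{n\to\infty}\sum_{k=1}^{n-1}\omega\big(k^a\zeta(s,k) - (k+x)^a\zeta(s,k+x)\big).
$$
Splitting the (now convergent) sum, the $\sum_k k^a \zeta(s,k)$ part is exactly $\sumvariable_+(s,a,0)$ and the $\sum_k (k+x)^a\zeta(s,k+x)$ part is exactly $\sumvariable_+(s,a,x)$; here I must check that both series converge individually, which holds because $\Re(s) > a + 2$ guarantees absolute convergence of $\sum_k (k+x)^a \zeta(s,k+x)$ (termwise, $(k+x)^a\zeta(s,k+x) = O((k+x)^{a+1-s})$ with exponent $a+1-s < -1$). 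Therefore $f(x) = c_{s,a} - \omega x^a\zeta(s,x) + \omega\sumvariable_+(s,a,0) - \omega\sumvariable_+(s,a,x)$, which is precisely \eref{eq:motivationforPhi}.

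The main obstacle I anticipate is not any single hard estimate but rather the careful bookkeeping of the convexity/sign conventions: making sure that when $\omega < 0$ the function $g$ is $1$-concave (not $1$-convex), invoking the appropriate branch of \thmref{thm:maintheorem}, and correctly deducing that the conclusion holds for the weaker property $p=0$ as well as $p=1$. The interchange-of-limits point — that the telescoped-style difference $\sum_k (k^a\zeta(s,k) - (k+x)^a\zeta(s,k+x))$ may be split into two separately convergent series — is routine given $\Re(s) > a+2$, but it is the one place where the precise hypothesis on $s$ is used, so I would state it explicitly. Finally, I would remark that \propref{prop:marichalzenaidizetaprop2} is the special case $a = 0$, $\omega = 1$ (then $x^a = 1$, $\sumvariable_+(s,0,0) = \zeta(s-1)$ by \eref{eq:zetaidentity}, and one recovers \eref{eq:marichalpropfunction} after absorbing constants), and that summing \eref{eq:motivationforPhi} over $a$ against the coefficients $\omega_a$ of $Q$ yields \propref{prop:phibohrpolynomial} by linearity of $\Delta$.
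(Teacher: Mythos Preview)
Your approach is the paper's: verify the hypotheses of \thmref{thm:maintheorem} for $g(x)=\omega x^{a}\zeta(s,x)$ and read off \eref{eq:motivationforPhi} from \eref{eq:uniquesolution}. The paper's version is terser---it simply asserts that $g$ is $p$-convex ($p$-concave) and that $\Delta^{p}g(n)\to 0$ for each $p\in\{0,1\}$, then invokes the theorem, leaving the identification of the limit in \eref{eq:uniquesolution} with the $\sumvariable_{+}$ expressions implicit. Your explicit computation that the correction term $\binom{x}{1}g(n)=x\,\omega n^{a}\zeta(s,n)$ vanishes and that the remaining sum splits into $\omega\sumvariable_{+}(s,a,0)-\omega\sumvariable_{+}(s,a,x)$ is correct and fills in what the paper omits.

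Two slips to fix. First, your convexity justification is off: $\zeta(s,\cdot)$ is \emph{decreasing}, so the ``product of nonnegative, increasing, convex functions'' rule does not apply to $x^{a}\zeta(s,x)$. Only eventual $p$-convexity is needed for \thmref{thm:maintheorem}, and that follows for instance from the asymptotic $x^{a}\zeta(s,x)\sim x^{a+1-s}/(s-1)$. Second---and this is the more consequential one---you assert that the $p=0$ instance of \thmref{thm:maintheorem} is unavailable because ``$\Delta^{0}g(n)=g(n)\to 0$ fails,'' yet a few lines later you use $n^{a}\zeta(s,n)\to 0$ to discard the $x\,g(n)$ term. These are the same limit; it \emph{does} hold since $s>a+2>a+1$. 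Hence the detour through ``an eventually monotone primitive of an eventually convex function is eventually concave'' is unnecessary: you can apply \thmref{thm:maintheorem} directly with $p=0$ (noting $g$ is eventually monotone), exactly as the paper does, and for $p=0$ the correction sum in \eref{eq:uniquesolution} is empty so \eref{eq:motivationforPhi} falls out immediately.
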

\begin{proof}
Fix $a \in \N$, $s > a + 2$, and $p \in \{0,1\}$, and suppose $\omega > 0$ ($\omega < 0$). Then the function $\omega x^a\zeta(s,x)$ is $p$-convex ($p$-concave) in $x$ on $\R_+$. Moreover, by \lemref{lem:shiftzeta} and the bound $s > a + 2$, the sequence $n \mapsto \omega\Delta^p \big(n^a\zeta(s,n)\big)$ converges to zero as $n \rightarrow \infty$. Thus, if $g_{s,a}(x) \defeq \omega x^a\zeta(s,x)$, then the premises of \thmref{thm:maintheorem} are satisfied. Therefore, \eref{eq:uniquesolution} implies that \eref{eq:motivationforPhi} constitutes the unique, eventually $p$-concave ($p$-convex) solutions $f_{s,a} : \R_+ \rightarrow \R$ to the difference equation $\Delta f_{s,a}(x) = g_{s,a}(x)$, as desired.
\end{proof}

Note, for a real, degree-$\ell$ polynomial $Q(x) = \omega_0 + \omega_1 x + \cdots + \omega_\ell x^\ell$ with $\omega_\ell \neq 0$, if $s > 1$ and $\omega_\ell > 0$ ($\omega_\ell < 0$), then, for all $p \in \{0,1\}$, the function $Q(x)\zeta(s,x)$ is eventually $p$-convex ($p$-concave) in $x$ on $\R_+$. Hence, for fixed $s > \ell + 2$ and all $p \in \{0,1\}$, \thmref{thm:maintheorem} and \lemref{lem:phibohr} imply that the unique, eventually $p$-concave ($p$-convex) solutions $f_{s,Q} : \R_+ \rightarrow \R$ to the difference equation $\Delta f_{s,Q}(x) = Q(x)\zeta(s,x)$ are of the form $f_{s,Q}(x) = \sum_{a = 0}^\ell f_{s,a}(x)$. This is \propref{prop:phibohrpolynomial}.

\begin{lemma}
\label{lem:absoluteconvergence}
If $a \in \N$ and $s \in \C$ with $\Re(s) > a + 2$, then, for all $x \in \R_+ \cup \{0\}$, 
$$
\mathrm{abs}(\sumvariable_{\pm}(s,a,x)) \defeq \sum_{k \geq 1} \sum_{n \geq 0} \left|\frac{(k + x)^a}{(n + k + x)^s}\right| < \infty.
$$ 
\end{lemma}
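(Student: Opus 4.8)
The plan is to reduce the double sum to a single sum, bound each Hurwitz zeta value by an elementary power of $k+x$, and then compare with a convergent $p$-series.

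First I would note that, since $k+x$ and $n+k+x$ are positive reals, $\left|\frac{(k+x)^a}{(n+k+x)^s}\right| = \frac{(k+x)^a}{(n+k+x)^\sigma}$ with $\sigma \defeq \Re(s) > a+2$. Every term of the double sum is thus non-negative, so Tonelli's theorem permits summing over $n$ first:
$$
\mathrm{abs}(\sumvariable_{\pm}(s,a,x)) = \sum_{k \geq 1}(k+x)^a \sum_{n \geq 0}\frac{1}{(n+k+x)^\sigma} = \sum_{k \geq 1}(k+x)^a\,\zeta(\sigma, k+x).
$$

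Next I would bound $\zeta(\sigma, y)$ for $y \geq 1$ by an integral comparison: because $t \mapsto (t+y)^{-\sigma}$ decreases on $\R_+$, one has $\zeta(\sigma, y) = y^{-\sigma} + \sum_{n \geq 1}(n+y)^{-\sigma} \leq y^{-\sigma} + \int_0^\infty (t+y)^{-\sigma}\,\d t = y^{-\sigma} + \frac{y^{1-\sigma}}{\sigma-1}$. Since $k \geq 1$ forces $y = k+x \geq 1$ and hence $y^{-\sigma} \leq y^{1-\sigma}$, this yields $\zeta(\sigma, k+x) \leq \big(1 + \tfrac{1}{\sigma-1}\big)(k+x)^{1-\sigma}$.

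Substituting, $\mathrm{abs}(\sumvariable_{\pm}(s,a,x)) \leq \big(1+\tfrac{1}{\sigma-1}\big)\sum_{k \geq 1}(k+x)^{a+1-\sigma}$. The hypothesis $\sigma > a+2$ gives $a+1-\sigma < -1$, so, using $k+x \geq k$ and the negativity of the exponent, $\sum_{k \geq 1}(k+x)^{a+1-\sigma} \leq \sum_{k \geq 1}k^{a+1-\sigma} = \zeta(\sigma-a-1) < \infty$, which finishes the proof. There is no real obstacle here: the only steps needing care are the appeal to Tonelli to reorder the non-negative double sum and the bookkeeping with $\sigma > a+2$ that makes the terminal $p$-series converge — both entirely routine. (I expect this lemma is invoked later precisely to license the term-by-term manipulations of $\sumvariable_{\pm}$ in the proofs of \thmrefs{thm:nonalternatingcase}{thm:alternatingcase}.)
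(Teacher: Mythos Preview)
Your proof is correct. The route differs from the paper's, though both are short. The paper first absorbs the numerator into the denominator via $(k+x)^a \leq (n+k+x)^a$, obtaining
\[
\mathrm{abs}(\sumvariable_{\pm}(s,a,x)) \leq \sum_{k \geq 1}\sum_{n \geq 0}\frac{1}{(n+k+x)^{\Re(s)-a}} \leq \sum_{k \geq 1}\zeta(2+\epsilon,k),
\]
and then closes with the identity $\sum_{k \geq 1}\zeta(2+\epsilon,k) = \zeta(1+\epsilon)$ from \eref{eq:zetaidentity}. You instead keep the factor $(k+x)^a$ outside, bound $\zeta(\sigma,k+x)$ by an elementary integral comparison, and finish with a $p$-series. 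Your argument is a touch longer but entirely self-contained; the paper's is slicker but leans on the previously established identity \neref{eq:zetaidentity}. Either way the lemma is routine, and your parenthetical is right: its sole purpose is to justify the rearrangements in the proofs of \thmrefs{thm:nonalternatingcase}{thm:alternatingcase}. (One nitpick: your invocation of Tonelli is harmless but unnecessary at that step, since the double sum is already iterated with $n$ inside.)
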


\begin{proof}
Fix $x \in \R_+ \cup \{0\}$. As $a \geq 0$, $\mathrm{abs}(\sumvariable_{\pm}(s,a,x))$ satisfies
$$
\mathrm{abs}(\sumvariable_{\pm}(s,a,x)) \leq \sum_{k \geq 1} \sum_{n \geq0} \frac{1}{(n + k + x)^{\Re(s) - a}}.
$$
Moreover, since $\Re(s) > a + 2$, there exists $\epsilon > 0$ such that $\Re(s) - a \geq 2 + \epsilon$. Therefore,
\begin{align*}
\mathrm{abs}(\sumvariable_{\pm}(s,a,x)) &\leq \sum_{k \geq 1} \sum_{n \geq0} \frac{1}{(n + k + x)^{2 + \epsilon}}\\
&= \sum_{k \geq 1} \zeta(2 + \epsilon, k + x).
\end{align*}
As $x \geq 0$, $\zeta(2 + \epsilon, k + x) \leq \zeta(2 + \epsilon, k)$ for every $k \geq 1$. Hence, 
$$
\mathrm{abs}(\sumvariable_{\pm}(s,a,x)) \leq \sum_{k \geq 1} \zeta(2 + \epsilon, k) = \zeta(1 + \epsilon),
$$
where the equality follows from \eref{eq:zetaidentity}. Since $\epsilon > 0$, the RHS is finite.
\end{proof}

Therefore, under the conditions of \lemref{lem:absoluteconvergence}, we can permute the terms in $\sumvariable_{\pm}(s,a,x)$ with impunity, including the terms that arise from different Hurwitz zeta functions. In particular, Lemmas \ref{lem:shiftzeta} and \ref{lem:absoluteconvergence} imply that $\sumvariable_{\pm}(s,a,x)$ satisfies
\begin{equation}
\sumvariable_{\pm}(s,a,x) = \sum_{k \geq 1}(\pm 1)^k\sum_{n \geq k}\frac{(k + x)^a}{(n + x)^s}
\label{eq:shiftofphi}
\end{equation}
whenever $a \in \N$, $x \in \R_+ \cup \{0\}$, and $s \in \C$ with $\Re(s) > a + 2$.

\begin{lemma}
\label{lem:BernoulliRecurrence}
The Bernoulli polynomials satisfy $B_a(x + 1) = ax ^{a-1} + B_a(x)$ for all $a \in \N$ and $x \in \R$ \cite{Abramowitz1, Apostol1}. Therefore, if $k \in \N$, then
$$
B_a(k + x + 1) = B_a(x) + a\sum_{j = 0}^{k} (j + x)^{a-1}.
$$
\end{lemma}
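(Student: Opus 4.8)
The plan is to take the first assertion---the finite-difference identity $B_a(x+1) - B_a(x) = a x^{a-1}$---as the standard fact about Bernoulli polynomials recorded in \cite{Abramowitz1, Apostol1}. If one wishes to see it quickly, it falls out of the exponential generating function $\frac{t e^{xt}}{e^t - 1} = \sum_{a \geq 0} B_a(x)\frac{t^a}{a!}$: subtracting the series at $x$ from the series at $x+1$ gives $\frac{t e^{xt}(e^t-1)}{e^t-1} = t e^{xt} = \sum_{a \geq 1} a x^{a-1}\frac{t^a}{a!}$, and comparing coefficients yields the identity (for $a = 0$ one reads $a x^{a-1}$ as $0$, and the identity reduces to the correct statement $B_0(x+1) = B_0(x)$ since $B_0 \equiv 1$).

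Granting this, I would derive the second assertion by a telescoping sum. First replace $x$ by $j + x$ in the finite-difference identity, for each integer $j$ with $0 \leq j \leq k$, obtaining
$$
B_a(j + x + 1) - B_a(j + x) = a(j + x)^{a-1}.
$$
Summing both sides over $j = 0, 1, \dots, k$, the left-hand side telescopes to $B_a(k + x + 1) - B_a(x)$, while the right-hand side is $a\sum_{j=0}^{k}(j + x)^{a-1}$; rearranging gives exactly the claimed formula. Equivalently, one can argue by induction on $k$: the base case $k = 0$ is the finite-difference identity itself, and the inductive step is one further application of it.

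I do not anticipate any genuine obstacle, which is precisely why the lemma is flagged as trivial in the text. The only points deserving a moment's care are purely bookkeeping: making sure the telescoping index runs from $j = 0$ through $j = k$ inclusive, so that the upper endpoint contributes $B_a(k + x + 1)$ and the lower endpoint contributes $-B_a(x)$, and keeping in mind the harmless $a = 0$ convention for $x^{a-1}$ noted above (in any case, the lemma is only invoked with index $a + 1 \geq 1$ in the sequel).
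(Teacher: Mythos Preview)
Your proposal is correct: the telescoping argument from the standard difference identity $B_a(x+1)-B_a(x)=ax^{a-1}$ is exactly the intended derivation, and the paper in fact omits the proof entirely, declaring the lemma trivial. There is nothing to add or compare.
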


\begin{lemma}
\label{lem:EulerRecurrence}
The Euler polynomials satisfy $E_a(x + 1) = 2x^a - E_a(x)$ for all $a \in \N$ and $x \in \R$ \cite{Abramowitz1}. Therefore, if $k \in \N$, then
$$
(-1)^kE_{a}(k + x + 1) = E_a(x + 1) + 2\sum_{j=1}^k (-1)^j (j + x)^a.
$$
\end{lemma}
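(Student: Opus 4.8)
The plan is to deduce the stated summation identity from the elementary functional equation $E_a(x+1) = 2x^a - E_a(x)$ by a telescoping argument. First I would rewrite this equation, with $x$ replaced by $j+x$ for an integer index $j \geq 1$, in the balanced form
$$
E_a(j+x+1) + E_a(j+x) = 2(j+x)^a .
$$
Multiplying through by $(-1)^j$ and using the trivial reindexing $j+x = (j-1)+x+1$, this becomes
$$
(-1)^j E_a(j+x+1) - (-1)^{j-1} E_a\big((j-1)+x+1\big) = 2(-1)^j (j+x)^a .
$$

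Next I would sum this identity over $j = 1, 2, \dots, k$. Setting $t_j \defeq (-1)^j E_a(j+x+1)$, the left-hand side is the telescoping sum $\sum_{j=1}^{k}(t_j - t_{j-1}) = t_k - t_0 = (-1)^k E_a(k+x+1) - E_a(x+1)$, while the right-hand side is precisely $2\sum_{j=1}^{k}(-1)^j (j+x)^a$. Rearranging gives exactly the claimed formula, and the edge case $k=0$ reduces to the tautology $E_a(x+1) = E_a(x+1)$ with an empty sum. An equivalent route is induction on $k$, peeling off the top term by applying the recurrence at $x \mapsto k+x+1$, but the telescoping presentation is the cleanest.

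As the authors indicate, there is no genuine obstacle here; the lemma is trivial. The only points meriting a line of care are the index shift $j+x = (j-1)+x+1$ that makes the sum collapse, and the status of the underlying recurrence $E_a(x+1) = 2x^a - E_a(x)$ itself, which is standard: it follows from the exponential generating function $\tfrac{2e^{xt}}{e^t+1} = \sum_{a\geq 0} E_a(x)\,\tfrac{t^a}{a!}$ upon noting $\sum_{a\geq 0}\big(E_a(x+1)+E_a(x)\big)\tfrac{t^a}{a!} = \tfrac{2e^{xt}(e^t+1)}{e^t+1} = 2e^{xt}$. The companion Bernoulli statement, \lemref{lem:BernoulliRecurrence}, is handled identically by telescoping $B_a(x+1) - B_a(x) = a x^{a-1}$, but without the alternating sign.
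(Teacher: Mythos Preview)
Your telescoping argument is correct and is exactly the natural verification; the paper itself omits the proof entirely, declaring this lemma trivial, so your proposal aligns with the authors' intent.
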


\begin{lemma}
\label{lem:DirichletSeriesBernoulli}
Fix $a \in \N$ and $s \in \C$ with $\Re(s) > a + 2$. Then, for all $x \in \R_+ \cup \{0\}$, the Dirichlet-Hurwitz series of the Bernoulli polynomials satisfies
$$
\sum_{k \geq 1} \frac{B_{a}(k + x)}{(k + x)^{s}} = -a \zeta(s-a+1,x+1) + \sum_{j=0}^a \binom{a}{j} B_j \zeta(s-a+j,x+1).
$$
\end{lemma}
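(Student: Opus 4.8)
The plan is to reduce the claim to the elementary monomial expansion of the Bernoulli polynomials, term‑by‑term division by $(k+x)^s$, and a legitimate interchange of a finite sum with a convergent infinite sum. First I would record the monomial expansion of $B_a$ under the paper's convention $B_1 = +\tfrac12$. Writing $\beta_j$ for the Bernoulli numbers in the ``minus'' convention $\beta_1 = -\tfrac12$ (so that $\beta_j = B_j$ for every $j \neq 1$), the classical identity reads $B_a(y) = \sum_{j=0}^a \binom{a}{j}\beta_j\, y^{a-j}$; replacing $\beta_1$ by $B_1$ changes the $j=1$ term by $\binom{a}{1}(\beta_1 - B_1)\,y^{a-1} = -a\,y^{a-1}$, whence
$$
B_a(y) = -a\,y^{a-1} + \sum_{j=0}^a \binom{a}{j} B_j\, y^{a-j}, \qquad y \in \R.
$$
(One can alternatively obtain this by a one‑line induction from the recurrence $B_a(x+1)-B_a(x)=ax^{a-1}$ of \lemref{lem:BernoulliRecurrence}.)

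Next I would set $y = k+x$ and divide by $(k+x)^s$, so that for each $k \geq 1$,
$$
\frac{B_a(k+x)}{(k+x)^s} = -a\,(k+x)^{\,a-1-s} + \sum_{j=0}^a \binom{a}{j} B_j\, (k+x)^{\,a-j-s}.
$$
Summing over $k \geq 1$ and pushing the finite $j$‑sum outside the $k$‑sum, the claim follows once we identify $\sum_{k \geq 1}(k+x)^{-w} = \zeta(w,x+1)$, which is immediate from \lemref{lem:shiftzeta} with $k=1$. The interchange is legitimate because every exponent that appears, namely $w = s-a+1$ and $w = s-a+j$ for $0 \leq j \leq a$, satisfies $\Re(w) \geq \Re(s) - a > 2 > 1$ (the extreme case being $w = s-a$), so each of the finitely many series $\sum_{k \geq 1}(k+x)^{-w}$ converges absolutely; this is in the same spirit as \lemref{lem:absoluteconvergence}.

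There is no serious obstacle here — the lemma is essentially bookkeeping — but the one point that must not be overlooked is the Bernoulli‑number convention. The naive expansion $B_a(y) = \sum_{j}\binom{a}{j}B_j\,y^{a-j}$ is \emph{false} under $B_1 = +\tfrac12$, and it is precisely the correction term $-a\,y^{a-1}$ in the displayed expansion that produces the summand $-a\,\zeta(s-a+1,x+1)$ in the statement of the lemma. Everything else — the term‑by‑term division and the rearrangement — is routine given \lemref{lem:shiftzeta} and the absolute convergence noted above.
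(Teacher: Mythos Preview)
Your proposal is correct and follows essentially the same route as the paper: expand $B_a(y) = -a y^{a-1} + \sum_{j=0}^a \binom{a}{j} B_j y^{a-j}$ (with the $B_1 = +\tfrac12$ convention), substitute $y = k+x$, divide by $(k+x)^s$, sum over $k$, and rearrange using absolute convergence to recognize each piece as a Hurwitz zeta value $\zeta(\cdot, x+1)$. Your explicit discussion of the convention issue producing the $-a\,\zeta(s-a+1,x+1)$ term is a helpful gloss on what the paper simply cites from standard references.
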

\begin{proof}
Fix $x \in \R_+ \cup \{0\}$. With $B_1 = +\frac{1}{2}$, the Bernoulli polynomials satisfy \cite{Abramowitz1, Apostol1}
$$
B_a(x) = - ax^{a-1} + \sum_{j = 0}^a \binom{a}{j} B_j x^{a-j}.
$$ 
Therefore,
\begin{align*}
\sum_{k \geq 1} \frac{B_{a}(k+x)}{(k+x)^{s}} &= \sum_{k \geq 1} \frac{1}{(k+x)^s} \left( - a (k+x)^{a-1} + \sum_{j = 0}^a \binom{a}{j} B_j (k+x)^{a-j} \right) \\
&= -a \zeta(s-a+1,x+1) + \sum_{j=0}^a \binom{a}{j} B_j \zeta(s-a+j,x+1),
\end{align*}
where the rearrangement is justified because $\Re(s) > a + 2$, so the series over the Bernoulli polynomials absolutely converges.
\end{proof}

\begin{lemma}
\label{lem:DirichletSeriesEuler}
Fix $a \in \N$ and $s \in \C$ with $\Re(s) > a + 2$. Then, for all $x \in \R_+ \cup \{0\}$, the alternating Dirichlet-Hurwitz series of the Euler polynomials satisfies 
$$
\sum_{k \geq 1} \frac{(-1)^{k+1}E_{a}(k + x + 1)}{(k + x)^{s}} = \sum_{j=0}^{a} \sum_{\ell = 0}^{a-j} \frac{E_j}{2^{a-\ell}} 
 \binom{a}{j} \binom{a - j}{\ell} L(-1, s - \ell, x+1).
$$
\end{lemma}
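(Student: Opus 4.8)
The plan is to mirror the proof of \lemref{lem:DirichletSeriesBernoulli}, replacing the expansion of the Bernoulli polynomials with the analogous expansion of the Euler polynomials in terms of the Euler numbers. Recall the classical identity
$$
E_a(z) = \sum_{j=0}^{a} \binom{a}{j}\frac{E_j}{2^j}\Big(z - \tfrac12\Big)^{a-j},
$$
which follows at once from the generating-function factorization $\frac{2e^{zt}}{e^t+1} = e^{(z-1/2)t}\cdot\frac{2}{e^{t/2}+e^{-t/2}}$ together with $\frac{2}{e^{t/2}+e^{-t/2}} = \sum_{j \geq 0}\frac{E_j}{2^j}\frac{t^j}{j!}$ (see \cite{Abramowitz1, Apostol1}). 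Setting $z = k + x + 1$, so that $z - \tfrac12 = (k+x) + \tfrac12$, and expanding $\big((k+x)+\tfrac12\big)^{a-j}$ by the binomial theorem, I would collect the powers of $k+x$ and use $2^{j}\cdot 2^{a-j-\ell} = 2^{a-\ell}$ to obtain the purely polynomial identity
$$
E_a(k+x+1) = \sum_{j=0}^{a}\sum_{\ell=0}^{a-j} \binom{a}{j}\binom{a-j}{\ell}\frac{E_j}{2^{a-\ell}}\,(k+x)^{\ell}.
$$

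Next I would divide by $(k+x)^{s}$, sum over $k \geq 1$ against the sign $(-1)^{k+1}$, and interchange the finite double sum over $(j,\ell)$ with the sum over $k$. This interchange is justified exactly as in \lemref{lem:DirichletSeriesBernoulli} and \lemref{lem:absoluteconvergence}: since $0 \leq \ell \leq a$ and $\Re(s) > a + 2$, each series $\sum_{k \geq 1}(-1)^{k+1}(k+x)^{\ell - s}$ converges absolutely, because $\Re(s) - \ell \geq \Re(s) - a > 2$. Thus
$$
\sum_{k \geq 1}\frac{(-1)^{k+1}E_a(k+x+1)}{(k+x)^{s}} = \sum_{j=0}^{a}\sum_{\ell=0}^{a-j}\binom{a}{j}\binom{a-j}{\ell}\frac{E_j}{2^{a-\ell}}\sum_{k \geq 1}\frac{(-1)^{k+1}}{(k+x)^{s-\ell}}.
$$
Finally, the substitution $k = n+1$ gives $\sum_{k \geq 1}(-1)^{k+1}(k+x)^{-(s-\ell)} = \sum_{n \geq 0}(-1)^{n}(n+x+1)^{-(s-\ell)} = L(-1, s-\ell, x+1)$, which is the asserted formula.

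I expect the only genuine bookkeeping hazard to be the nested binomial expansion: one must keep straight that the factor $\tfrac{1}{2^{j}}$ coming from the Euler-number expansion of $E_a$ combines with the factor $\tfrac{1}{2^{a-j-\ell}}$ coming from the binomial expansion of $\big((k+x)+\tfrac12\big)^{a-j}$ to give $\tfrac{1}{2^{a-\ell}}$, so that the exponent of $2$ depends only on $a$ and $\ell$. Everything else is a verbatim analogue of the Bernoulli case. The one structural difference worth flagging is that, because $E_a$ is evaluated at $k+x+1$ while the denominator carries exponent $s$ based at $k+x$ and the summation is alternating, the final reindexing turns the inner sum into the alternating Lerch zeta value $L(-1, s-\ell, x+1)$ rather than an ordinary Hurwitz zeta value; this is the mechanism by which the Lerch function enters \thmref{thm:alternatingcase}.
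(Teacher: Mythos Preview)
Your proposal is correct and follows essentially the same route as the paper: expand $E_a(k+x+1)$ via the Euler-number identity $E_a(z)=\sum_j\binom{a}{j}\frac{E_j}{2^j}(z-\tfrac12)^{a-j}$, apply the binomial theorem to $(k+x+\tfrac12)^{a-j}$, combine the powers of $2$, interchange the finite sums with the $k$-sum by absolute convergence, and recognize the inner alternating series as $L(-1,s-\ell,x+1)$. The only cosmetic difference is that the paper states the expansion with argument $x+1$ and then substitutes $x\mapsto k+x$, whereas you substitute $z=k+x+1$ directly; the computations are otherwise identical.
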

\begin{proof}
The Euler polynomials satisfy \cite{Abramowitz1}
\begin{align*}
E_a(x + 1) &= \sum_{j=0}^a \binom{a}{j} \frac{E_j}{2^j} \left( x + \frac{1}{2} \right)^{a-j}\\
&= \sum_{j=0}^a \binom{a}{j} \frac{E_j}{2^j} \sum_{\ell = 0}^{a-j} \binom{a-j}{\ell} \frac{x^\ell}{2^{a-j-\ell}},
\end{align*}
where the equality in the second line follows from the binomial theorem. Therefore,
\begin{align*}
\sum_{k \geq 1} \frac{(-1)^{k+1}E_{a}(k + x + 1)}{(k + x)^{s}} &= \sum_{k \geq 1} \frac{(-1)^{k+1}}{(k + x)^{s}} \sum_{j=0}^{a} \binom{a}{j} \frac{E_j}{2^j} \sum_{\ell = 0}^{a-j} \binom{a - j}{\ell} \frac{(k + x)^\ell}{2^{a-j-\ell}}\\
&=\sum_{j=0}^{a} \sum_{\ell = 0}^{a-j} \frac{E_j}{2^{a-\ell}} \binom{a}{j} \binom{a - j}{\ell} \sum_{k \geq 1} \frac{(-1)^{k+1}}{(k + x)^{s-\ell}},
\end{align*}
where the rearrangement in the second line is justified because $\Re(s) > a + 2$, so the series over the Euler polynomials absolutely converges. Using the definition \neref{eq:Lerch} of the Lerch zeta function, it is straightforward to prove that the internal sum over $k$ equals $L(-1, s - \ell, x + 1)$, as desired.
\end{proof}

\subsection*{Proof of \thmref{thm:nonalternatingcase}}

Fix $x \in \R_+ \cup \{0\}$ and consider the quantity $B_a(x + 1)\zeta(s,x + 1) + a\sumvariable_+(s, a-1,x)$, where $a-1 \in \N$ and $\Re(s) > a+1$. The premises of \lemref{lem:absoluteconvergence} hold, so we are allowed to permute terms with impunity. In particular, by \eref{eq:shiftofphi},
$$
B_a(x + 1) \zeta(s, x+1) + a\sumvariable_+(s, a-1, x) = \sum_{k \geq 1} \frac{B_a(x + 1)}{{(k + x)^s}} + \sum_{k \geq 1}\sum_{n \geq k}\frac{a(k + x)^{a-1}}{(n + x)^s}.
$$
Combining terms with like-denominators gives
$$
B_a(x + 1) \zeta(s, x+1) + a\sumvariable_+(s, a-1, x) = \sum_{k \geq 1} \frac{B_a(x + 1) + a\sum_{j = 1}^k (j + x)^{a-1}}{(k + x)^s}.
$$
Then, by the Bernoulli polynomial recurrence $B_a(x + 1) = B_a(x) + ax^{a-1}$,
$$
B_a(x + 1) \zeta(s, x+1) + a\sumvariable_+(s, a-1, x) = \sum_{k \geq 1} \frac{B_a(x) + a\sum_{j = 0}^k (j + x)^{a-1}}{(k + x)^s}.
$$
By \lemref{lem:BernoulliRecurrence}, we identify the numerator as $B_a(k + x + 1)$. Therefore,
$$
B_a(x + 1) \zeta(s, x+1) + a\sumvariable_+(s, a-1,x) = \sum_{k \geq 1}\frac{B_a(k + x + 1)}{(k + x)^s},
$$
which, after a further application of the recurrence $B_a(x + 1) = B_a(x) + ax^{a-1}$, yields
$$
B_a(x + 1) \zeta(s, x+1) + a\sumvariable_+(s, a-1,x) = a \zeta(s - a + 1, x + 1) + \sum_{k \geq 1}\frac{B_a(k + x)}{(k + x)^s}.
$$
Consequently, $\sumvariable_+(s,a-1,x)$ is related to the Dirichlet-Hurwitz series of the Bernoulli polynomials. It thus follows from \lemref{lem:DirichletSeriesBernoulli} that
$$
\sumvariable_+(s, a-1,x) = -\frac{1}{a}B_a(x+1)\zeta(s,x+1) + \frac{1}{a}\sum_{j=0}^a \binom{a}{j} B_j \zeta(s-a+j,x+1).
$$
Taking $a \rightarrow a + 1$ gives \thmref{thm:nonalternatingcase}.\qed

\subsection*{Proof of \thmref{thm:alternatingcase}}
Fix $x \in \R_+ \cup \{0\}$ and consider the quantity $E_a(x + 1) \zeta(s, x+1) - 2\sumvariable_{-}(s,a, x)$, where $a \in \N$ and $\Re(s) > a+2$. The premises of \lemref{lem:absoluteconvergence} hold, so we are allowed to permute terms with impunity. In particular, by \eref{eq:shiftofphi},
$$
E_a(x + 1) \zeta(s, x+1) - 2\sumvariable_{-}(s,a, x) = \sum_{k \geq 1} \frac{E_a(x+1)}{(k + x)^s} - \sum_{k \geq 1}(\pm 1)^{k}\sum_{n \geq k}\frac{2(k + x)^a}{(n + x)^s}.
$$
Combining terms with like-denominators gives
$$
E_a(x + 1) \zeta(s, x+1) - 2\sumvariable_{-}(s,a, x) = \sum_{k \geq 1} \frac{E_a(x + 1) + 2\sum_{j = 1}^k (-1)^{j + 1}(j + x)^a}{(k + x)^s}.
$$
By \lemref{lem:EulerRecurrence}, the terms in the numerator are related to the Euler polynomials. In particular,
$$
E_a(x + 1) + 2\sum_{j=1}^k (-1)^{j + 1} (j + x)^a = (-1)^{k + 1}E_{a}(k + x + 1) + 2E_a(x + 1).
$$
Therefore,
$$
E_a(x + 1) \zeta(s, x+1) - 2\sumvariable_{-}(s,a, x) = \sum_{k \geq 1} \frac{(-1)^{k + 1}E_{a}(k + x + 1)}{(k + x)^s} + 2E_a(x+1)\zeta(s,x+1).
$$
Consequently, $\sumvariable_{-}(s,a,x)$ is related to the alternating Dirichlet-Hurwitz series of the Euler polynomials. It thus follows from \lemref{lem:DirichletSeriesEuler} that
$$
\sumvariable_{-}(s,a, x) = - \frac{1}{2}E_a(x+1)\zeta(s,x+1) - \sum_{j=0}^{a} \sum_{\ell = 0}^{a-j} \frac{E_j}{2^{a-\ell + 1}}\binom{a}{j} \binom{a - j}{\ell} L(-1, s - \ell, x+1),
$$
as desired. \qed

%% Begin Bibliography %%
\bibliographystyle{amsplain}
\bibliography{Hurwitz}

\end{document}